\theoremstyle{plain}
\newtheorem{theorem}{Theorem}
\newtheorem{lem}{Lemma}
\newtheorem{cor}{Corollary}
\newcommand\ran{\operatorname{ran}}
\newcommand\Aff{\operatorname{Aff}}
\newcommand\Area{\operatorname{\blacklozenge}}
\newcommand{\N}{\mathbb{N}}
\newcommand{\R}{\mathbb{R}}
\begin{document}

\title[On $n$-norm preservers and the Aleksandrov problem]{On $n$-norm preservers and the Aleksandrov conservative $n$-distance problem}
\author{Gy.~P.~Geh\'er}
\thanks{The author was also supported by the "Lend\" ulet" Program (LP2012-46/2012) of the Hungarian Academy of Sciences and by the Hungarian National Research, Development and Innovation Office -- NKFIH (grant no.~K115383).}
\address{MTA-SZTE Analysis and Stochastics Research Group\\
Bolyai Institute\\
University of Szeged\\
H-6720 Szeged, Aradi v\'ertan\'uk tere 1, Hungary}
\address{MTA-DE "Lend\"ulet" Functional Analysis Research Group, Institute of Mathematics\\
University of Debrecen\\
H-4002 Debrecen, P.O. Box 400, Hungary}
\email{gehergy@math.u-szeged.hu or gehergyuri@gmail.com}
\urladdr{http://www.math.u-szeged.hu/$\sim$gehergy/}
\keywords{$n$-normed space, $n$-norm preserver, $n$-isometry, Aleksandrov problem.}
\subjclass[2010]{Primary: 51M25, 51K05; Secondary: 47B49, 46B04, 46B20.}

\begin{abstract}
	The goal of this paper is to point out that the results obtained in the recent papers \cite{2010_nn,2007_nn,2004_nn2,2004_nn} can be seriously strengthened in the sense that we can significantly relax the assumptions of the main results so that we still get the same conclusions.
	In order to do this first, we prove that for $n \geq 3$ any transformation which preserves the $n$-norm of any $n$ vectors is automatically plus-minus linear. 
	This will give a re-proof of the well-known Mazur--Ulam-type result that every $n$-isometry is automatically affine ($n \geq 2$) which was proven in several papers, e.g. in \cite{2009_nn}. 
	Second, following the work of Rassias and \v Semrl \cite{RaSe}, we provide the solution of a natural Aleksandrov-type problem in $n$-normed spaces, namely, we show that every surjective transformation which preserves the unit $n$-distance in both directions ($n\geq 2$) is automatically an $n$-isometry. 
\end{abstract}

\maketitle


\section{Introduction and statements of the main results}
Characterisations of Euclidean motions under mild hypothesis goes back to 1953, when Beckman and Quarles proved in \cite{BeQu} that an arbitrary transformation of $\R^d$ ($d\geq 2$) which preserves unit Euclidean distance (in one direction) is automatically a Euclidean motion (see \cite{Be,Ju} for alternative proofs). 
A similar conclusion does not hold in general, when we consider another norm on $\R^d$, namely, it is quite easy to construct a map on $\R^2$ which preserves unit $\ell^\infty$-distance but it is not an $\ell^\infty$-isometry.
The problem of characterising those at least two, but finite dimensional normed spaces $X$ which satisfy the property that any transformation $\phi\colon X\to X$ which preserves distance 1 (in one direction) is an isometry, was raised by Aleksandrov in \cite{Al}, hence it is usually called the \emph{Aleksandrov conservative distance problem}. 
Recently the author of this paper managed to show that the answer is affirmative for a large class of norms on $\R^2$ (see \cite{Ge}) which includes all strictly convex norms (see also \cite{Tyszka} for that case).
But in full generality the Aleksandrov problem is still open even in $\R^2$.  
However, some nice modified version of the problem was solved, see e.g.~\cite{BeBe,RaSe}. 

Turning back to Euclidean spaces, we may ask what those transformations $\phi\colon  E \to F$ are which satisfy the following property: 
\begin{equation}\label{nVOPP_eq}\tag{$n$VOPP}
\begin{gathered}
\Area_n(x_1-x_0,\dots, x_n-x_0) = 1 \quad \Longrightarrow \\ \Area_n(\phi(x_1)-\phi(x_0),\dots, \phi(x_n)-\phi(x_0)) = 1 \qquad
(x_0,\dots x_n\in E).
\end{gathered}
\end{equation}
Here $E$ and $F$ are real inner product spaces, $n\in\N$, $2\leq n\leq \min(\dim E, \dim F)$, and $\Area_n(x_1-x_0,\dots, x_n-x_0)$ denotes the usual $n$-dimensional volume (will be simply called $n$-volume from now on) of the parallelepiped $\big\{\sum_{j=1}^n t_j(x_j-x_0) \colon t_j\in[0,1]\big\}$, spanned by the vectors $x_1-x_0,\dots x_n-x_0$. 
Lester and Martin proved that 2VOPP maps are Euclidean motions if $n = 2 < \dim E = \dim F < \infty$; and equiaffine transformations, if $n = 2 = \dim E = \dim F$, \cite{Le2,Le}. 
(An equiaffine transformation is an affine map $\phi\colon E\to F$ such that the determinant of the matrix of the linear part of $\phi$, represented in some orthonormal bases of $E$ and $\ran \phi$, has determinant $\pm 1$). 
This is the so-called \emph{Lester--Martin theorem}.
A similar result for $n$VOPP maps is given in \cite{Be2}. 

The notion of $n$-volume and $n$VOPP maps can be generalised in the following natural way. 
Let $X$ be a real vector space with $\dim X \geq n$, and let us consider an $n$-variable function $\|\cdot,\dots, \cdot\|\colon X^n\to [0,\infty)$ which satisfies the following properties for every $x_1,\dots x_n, \widetilde x_1\in X$:
\begin{itemize}
\item[($n$N1)] $\|x_1,\dots, x_n\| = 0 \iff x_1,\dots x_n$ are linearly dependent,
\item[($n$N2)] $\|x_1,\dots, x_n\| = \|x_{\sigma(1)},\dots, x_{\sigma(n)}\|$ holds for any permutation $\sigma\in S_n$,
\item[($n$N3)] $\|\alpha\cdot x_1,x_2,\dots, x_n\| = |\alpha|\cdot\|x_1,x_2,\dots, x_n\|$ for every $\alpha\in\R$, and 
\item[($n$N4)] $\|x_1+\widetilde x_1,\dots, x_n\| \leq \|x_1,\dots, x_n\| + \|\widetilde x_1,\dots, x_n\|$.
\end{itemize}
Then we call $(X;\|\cdot,\dots,\cdot\|)$ a (real) \emph{$n$-normed space}. 
Note that the following property of $n$-norms is an easy consequence of ($n$N1) and ($n$N4):
\begin{itemize}
	\item[($n$N5)] if $y,x_2\dots x_n$ are linearly dependent, then $\|x_1+y,\dots, x_n\| = \|x_1,\dots, x_n\|$.
\end{itemize}
Obviously $\Area_n(\cdot,\dots,\cdot)$ is an $n$-norm if $X$ is an inner product space. 
The investigation of $n$-normed spaces started in the second half of the twentieth century (see e.g.~\cite{DiGaWh,Ga,Mi}), and it is a widely-investigated area even today. 

Throughout this paper, if we do not say otherwise, $X$ and $Y$ will always denote real $n$-normed spaces, and the $n$-norms on them will be denoted by the symbol $\|\cdot,\dots,\cdot\|$. 
A transformation $\phi\colon X\to Y$ is called an \emph{$n$-isometry} if it satisfies
\begin{equation}\label{nI_eq}\tag{$n$I}
\|x_1-x_0,\dots, x_n-x_0\| = \|\phi(x_1)-\phi(x_0),\dots, \phi(x_n)-\phi(x_0)\|
\end{equation}
for every $x_0,\dots x_n \in X$.
The fact that all $n$-isometries which fixes 0 are automatically linear is very-well known and it is usually referred to as the \emph{Mazur--Ulam theorem for $n$-normed spaces}. 
However, in this paper we will investigate a more general class of transformations which will be called \emph{$n$-norm preservers}. 
They are mappings $\phi\colon X\to Y$ satisfying the following condition:
\begin{equation}\label{nNP_eq}\tag{$n$NP}
\|x_1,\dots, x_n\| = \|\phi(x_1),\dots, \phi(x_n)\| \quad (x_1,\dots x_n \in X).
\end{equation}
We will prove in this paper that every $n$-norm preserver is automatically a plus-minus linear mapping and utilising that we will re-prove the aforementioned Mazur--Ulam-type theorem.

The Aleksandrov problem can be raised in $n$-normed spaces, as a possible generalisation of the Lester--Martin theorem. 
We say that the map $\phi\colon X\to Y$ has the $n$DOPP property (according to \cite{2004_nn2,RaSe}) if it fulfilles the following:
\begin{equation}\label{nDOPP_eq}\tag{$n$DOPP}
\begin{gathered}
\|x_1-x_0,\dots, x_n-x_0\| = 1 \quad \Longrightarrow \\ \|\phi(x_1)-\phi(x_0),\dots, \phi(x_n)-\phi(x_0)\| = 1 \quad (x_0,\dots x_n \in X).
\end{gathered}
\end{equation}
The \emph{Aleksandrov problem for $n$-normed spaces} is to characterise those finite dimensional $n$-normed spaces $X$ such that any $n$DOPP mapping $\phi\colon X\to X$ is an $n$-isometry. 

For a moment let us consider two (1-)normed spaces $X$ and $Y$. 
Usually we cannot expect from a general 1DOPP transformation $\phi\colon X\to Y$ to have a nice form, if $\dim X = \dim Y = \infty$ or $\dim X < \dim Y$ (see e.g.~\cite{BeQu,De}). 
Therefore in \cite{RaSe}, Rassias and \v Semrl considered surjections on normed spaces which fulfils the so-called (1SDOPP) property (defined below). 
They managed to show that such transformations are close to be isometries. 
In light of that, we will call the transformation $\phi\colon X\to Y$ an $n$SDOPP mapping, if it satisfies the following property ($n\in\N$):
\begin{equation}\label{nSDOPP_eq}\tag{$n$SDOPP}
\begin{gathered}
\|x_1-x_0,\dots, x_n-x_0\| = 1 \quad \iff \\ \|\phi(x_1)-\phi(x_0),\dots, \phi(x_n)-\phi(x_0)\| = 1 \quad (x_0,\dots x_n \in X).
\end{gathered}
\end{equation}

The goal of this paper is to contribute to the Aleksandrov problem in $n$-normed spaces, and as a byproduct, to significantly strengthen the results of \cite{2010_nn,2007_nn,2004_nn2,2004_nn}. 
Namely, we significantly relax the assumptions in those papers so that the conclusions still remain the same.

A map $\phi\colon X\to Y$ between real vector spaces is called \emph{plus-minus linear}, if there exists a map $\epsilon\colon X\to \{-1,1\}$ such that $\epsilon(\cdot)\phi(\cdot)$ is a linear transformation. 
Now, we state our first main result. 

\begin{theorem}\label{pmlin_nn_thm}
Let $X$ and $Y$ be two real $n$-normed spaces with $n\geq 3$, and $\phi\colon X\to Y$ be a (not necessarily surjective) transformation which satisfies \eqref{nNP_eq}. 
Then $\phi(\cdot)$ is plus-minus linear. 
\end{theorem}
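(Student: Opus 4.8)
\emph{Overview.}
The plan is to extract from \eqref{nNP_eq}, in increasing strength, the linearity of $\phi$: that it respects linear dependence, that it sends lines to lines, that it sends $2$-dimensional subspaces into $2$-dimensional subspaces together with an ``additivity up to sign'' there, and finally to glue these local facts into a global plus-minus linear map. For the preliminaries: by ($n$N1) and \eqref{nNP_eq} an $n$-tuple of $X$ is linearly dependent if and only if its $\phi$-image is. Hence $\ran\phi$ spans a subspace $R\le Y$ with $\dim R\ge n$, and $\phi$ takes linearly independent $n$-tuples to linearly independent $n$-tuples. If $\phi(x)=\phi(y)$, then $\|x,z_2,\dots,z_n\|=\|y,z_2,\dots,z_n\|$ for all $z_2,\dots,z_n\in X$, and by choosing these vectors suitably and using ($n$N1) and ($n$N3) one gets $x=\pm y$; so $\phi$ is injective up to sign. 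A variant of this, together with $\dim R\ge n$, shows $\phi(0)$ lies in the intersection of all $(n-1)$-dimensional subspaces spanned by images of linearly independent tuples, which is $\{0\}$; thus $\phi(0)=0$, and then $\phi(x)=0$ only for $x=0$.

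\emph{The key step.}
Fix linearly independent $v,v'\in X$; then $\phi(v),\phi(v')$ are linearly independent as well. For any $w_4,\dots,w_n\in X$ and any $p\in\mathrm{span}(v,v')$, the $n$-tuple $(p,v,v',w_4,\dots,w_n)$ is linearly dependent, hence so is its image; when the $(n-1)$ vectors $\phi(v),\phi(v'),\phi(w_4),\dots,\phi(w_n)$ are linearly independent this forces $\phi(p)$ into their span. Intersecting these spans over sufficiently many generic choices of $(w_4,\dots,w_n)$ — this is precisely where $n-2\ge1$ is used, to have such auxiliary vectors at all, and $\dim R\ge n$, to have enough room to eliminate them (for $n=3$ no auxiliary vectors are needed) — one obtains $\phi(p)\in W:=\mathrm{span}(\phi(v),\phi(v'))$. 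Hence $\phi$ maps the $2$-plane $V:=\mathrm{span}(v,v')$ into the $2$-plane $W$, and the same, easier, argument with $p$ ranging over $\R v$ shows $\phi$ takes lines through $0$ to lines through $0$; combined with ($n$N3) this gives $\phi(tx)=\pm t\,\phi(x)$. It remains to understand $\phi|_V$: choosing $u_3,\dots,u_n$ with $v,v',u_3,\dots,u_n$ linearly independent, the map $(p,q)\mapsto\|p,q,u_3,\dots,u_n\|$ is a genuine $2$-norm on $V$, and, by ($n$N5), any $2$-norm on a $2$-dimensional space is a positive constant times $|\det|$ in a fixed basis. Performing the same on $W$ and noting that the two constants agree by \eqref{nNP_eq}, we get $|\det_W(\phi(p),\phi(q))|=|\det_V(p,q)|$ for all $p,q\in V$. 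Writing $\phi(v+v')=a\phi(v)+b\phi(v')\in W$ and evaluating this identity at $(v+v',v)$ and $(v+v',v')$ forces $|a|=|b|=1$, i.e. $\phi(v+v')\in\{\pm\phi(v)\pm\phi(v')\}$. Together with the homogeneity above, this makes $\phi|_V$ plus-minus linear.

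\emph{Globalization.}
Fix $e_1\neq0$. For every $x\notin\R e_1$, $\phi$ restricted to $\mathrm{span}(e_1,x)$ is plus-minus linear by the previous step, and its linear part is uniquely determined by the normalization that it send $e_1$ to $\phi(e_1)$; call its value at $x$ by $T(x)$, and set $T(\lambda e_1):=\lambda\phi(e_1)$. One checks $T$ is well defined and (automatically) homogeneous. Additivity, $T(x+y)=T(x)+T(y)$, is the one point requiring care: one runs the $2$-plane analysis inside $\mathrm{span}(e_1,x,y)$ (at most $3$-dimensional, which is available since $\dim X\ge n\ge3$) and matches the sign normalizations of the various $2$-planes along their common line $\R e_1$, the injectivity up to sign being exactly what eliminates the inconsistent ``wrong-sign'' identifications (for example, $\phi(x)=\pm T(x-2\lambda e_1)$ would give $\phi(x)=\pm\phi(x-2\lambda e_1)$, hence $x=\pm(x-2\lambda e_1)$, which is false). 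Then $\phi(x)=\pm T(x)$ for all $x$, and taking $\epsilon(x)$ to be the sign with $\epsilon(x)\phi(x)=T(x)$ shows $\phi$ is plus-minus linear. (In passing one sees $T$ is injective, so $\dim Y\ge\dim X$ whenever such a $\phi$ exists.)

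\emph{Main obstacle.}
The preliminaries and the final assembly are, in essence, bookkeeping — the sign matching in the globalization being the most delicate of it. The genuinely new ingredient, and the only place the hypothesis $n\ge3$ enters essentially, is the step ``$\phi$ maps $2$-planes into $2$-planes'': to detect membership of a vector in a $2$-dimensional subspace through an $n$-norm one needs an $n$-tuple with at least one coordinate to spare ($n-2\ge1$), and then $\dim R\ge n$ to clear away those extra coordinates. For $n=2$ this mechanism collapses, and with it the whole conclusion — which is why the theorem is restricted to $n\ge3$.
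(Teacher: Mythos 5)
Your overall strategy --- show that $\phi$ maps $2$-planes into $2$-planes, prove plus-minus linearity on each $2$-plane via the induced $2$-norms, then glue --- is a genuinely different route from the paper's, which instead projectivises $\phi$, invokes the fundamental theorem of projective geometry (Theorem \ref{proj_thm}) to produce one global injective linear $A$ with $[\phi(x)]=[Ax]$, and only then uses the $2$-plane analysis to identify the local linear parts as scalar multiples $c_V\, A|_V$. The trouble is that your globalisation step is exactly the content that the projective theorem supplies, and as written it does not go through. When $e_1,x,y$ are linearly independent, the only $2$-plane containing $x$, $y$ and $x+y$ simultaneously is $\mathrm{span}(x,y)$, and this plane does \emph{not} contain $e_1$; so ``matching the sign normalizations of the various $2$-planes along their common line $\R e_1$'' cannot relate $T(x+y)$ to $T(x)+T(y)$. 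What the local result on $\mathrm{span}(x,y)$ actually yields is only $T(x+y)=\pm\bigl(T(x)+\delta\, T(y)\bigr)$ with an undetermined relative sign $\delta\in\{-1,1\}$, because the linear part of $\phi|_{\mathrm{span}(x,y)}$ is pinned down only up to a global sign and its values at $x$ and at $y$ are each known only up to sign. The injectivity-up-to-sign argument you offer rules out confusing two \emph{different} points (your $x$ versus $x-2\lambda e_1$ example) but says nothing about this relative sign. Eliminating $\delta=-1$ requires a cocycle-type consistency argument passing through further auxiliary planes of $\mathrm{span}(e_1,x,y)$; this is the genuinely hard part of the theorem, not bookkeeping, and it is missing.

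A smaller gap of the same nature occurs already inside a single $2$-plane $V=\mathrm{span}(v,v')$: from $|\det_W(\phi(p),\phi(q))|=|\det_V(p,q)|$ you check only the point $v+v'$, but a general point gives $\phi(\alpha v+\beta v')=\pm\alpha\phi(v)\pm\beta\phi(v')$ with a priori independent signs, and plus-minus linearity of $\phi|_V$ needs these choices to be coherent across \emph{all} points of $V$ (ruling out that some points follow $\alpha e+\beta f$ while others follow $\alpha e-\beta f$). This one is easily closed --- after normalising so that $\phi(v+v')=\pm(e+f)$, compare any point with $\alpha\beta\neq 0$ against $v+v'$ and use $|\alpha+\beta|\neq|\alpha-\beta|$ --- but it must be done; the paper sidesteps it by combining its Lemma \ref{seged1} with \cite[Theorem 1 (i)]{G6}, which is precisely the (non-trivial) statement that an area-of-parallelograms preserver of a plane is plus-minus linear. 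I would suggest either carrying out the sign-consistency arguments in full (for the global step this essentially amounts to reproving a special case of the fundamental theorem of projective geometry) or restructuring the proof as the paper does: first obtain the global linear $A$ from Theorem \ref{proj_thm}, which reduces all sign and scale matching to comparing the scalars $c_V$ on pairs of planes meeting in a line.
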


The proof relies on the fundamental theorem of projective geometry and a previous result of the author ((i) of \cite[Theorem 1]{G6}). 
For $\Area_n(\cdot,\dots,\cdot)$ we will obtain a consequence, Corollary \ref{T:RW_n_cor}, that can be considered as an additional result to \cite[Theorem 1]{G6}. 

Our second main result considers $n$SDOPP surjections. 

\begin{theorem}\label{nSDOPP_thm}
Let $X$ and $Y$ be two real $n$-normed spaces with $n\geq 2$, and $\phi\colon X\to Y$ be a surjective $n$SDOPP transformation. 
Then $\phi$ is an injective, affine map, and therefore it is an $n$-isometry. 
\end{theorem}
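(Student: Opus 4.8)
The plan is to adapt the method of Rassias and \v Semrl to the multilinear setting of $n$-norms. Since an $n$-norm only feels differences of its arguments, post-composing $\phi$ with the translation $y\mapsto y-\phi(0)$ preserves \eqref{nSDOPP_eq}, so I would first normalise to $\phi(0)=0$ and aim to show that this $\phi$ is a linear $n$-isometry. The easy opening move is injectivity: if $\phi(a)=\phi(b)$ with $a\neq b$, I would exhibit points $p_0,p_2,\dots,p_n$ with $\|a-p_0,p_2-p_0,\dots,p_n-p_0\|=1$ but $\|b-p_0,p_2-p_0,\dots,p_n-p_0\|\neq1$; such a configuration exists because, after choosing $p_2-p_0,\dots,p_n-p_0$ to span an $(n-1)$-dimensional subspace avoiding $b-a$ (possible as $\dim X\geq n$), the derived norm $\|\,\cdot\,,p_2-p_0,\dots,p_n-p_0\|$ on the corresponding quotient space cannot be invariant under the translation by the nonzero image of $b-a$, and one may then rescale a single auxiliary vector to make the value at $a$ equal to $1$. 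Applying \eqref{nSDOPP_eq} forward at $(p_0,a,p_2,\dots,p_n)$ and backward at $(p_0,b,p_2,\dots,p_n)$ through the identity $\phi(a)=\phi(b)$ yields a contradiction, so $\phi$ is a bijection; then $\phi^{-1}$ is again a surjective $n$SDOPP map, a symmetry I would use throughout.

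The real content is to upgrade ``$\phi$ preserves and reflects the unit $n$-distance'' to ``$\phi$ preserves \emph{every} $n$-distance'', that is, to \eqref{nI_eq}. Following Rassias and \v Semrl, I would first characterise, using only unit-$n$-distance configurations, the relation $\|x_1-x_0,\dots,x_n-x_0\|=m$ for a positive integer $m$: an $n$-parallelepiped has $n$-volume at most $m$ exactly when it can be linked by an $m$-step chain of unit-$n$-distance configurations to a degenerate one, and at least $m$ when no such chain of length $m-1$ exists; an induction on $m$ then transfers all of these integral $n$-distances across $\phi$ and $\phi^{-1}$. The hard part — and precisely the place where the classical one-dimensional Rassias--\v Semrl theorem yields only an approximate isometry — is getting from integral to arbitrary $n$-distances, and this is where I expect to need the extra rigidity available when $n\geq2$: concretely, the fact that on any $n$-dimensional subspace the $n$-norm is a fixed positive multiple of $|\det|$ relative to any basis (which both ties the problem to the equiaffine Lester--Martin phenomenon and supplies volume-preserving shears), so that a dense set of $n$-distances can be realised inside configurations assembled from unit ones and, by the $\phi\leftrightarrow\phi^{-1}$ symmetry, forced to be preserved; a density argument on the $n$-distance thresholds then yields \eqref{nI_eq}.

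Finally, once $\phi$ is an $n$-isometry fixing $0$, I would invoke the Mazur--Ulam theorem for $n$-normed spaces to conclude that $\phi$ is linear — for $n\geq3$ this is exactly the kind of statement Theorem~\ref{pmlin_nn_thm} reproves, and for $n=2$ it is available in the literature — so that, undoing the initial translation, the original map is affine. Injectivity also follows directly from linearity, since a linear $n$-isometry has trivial kernel: a nonzero kernel vector could be completed, using $\dim X\geq n\geq2$, to a linearly independent $n$-tuple of positive $n$-norm that would be sent to a linearly dependent one, against \eqref{nI_eq}. Conversely, affineness together with \eqref{nSDOPP_eq} and absolute homogeneity ($n$N3) immediately give back that $\phi$ is an $n$-isometry, which closes the loop asserted in the statement. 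So the decisive obstacle is squarely the middle step: extracting global $n$-metric rigidity from information about the unit $n$-distance alone.
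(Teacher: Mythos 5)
Your opening normalisation and the injectivity argument are fine (the paper gets injectivity more cheaply: two distinct points extend to a unit-$n$-distance configuration, whose image again has unit $n$-distance, so the image points are affinely independent), and your closing steps are fine \emph{given} an $n$-isometry. The problem is that the entire weight of the proof rests on your middle step, and that step is not actually carried out --- you yourself flag it as ``the decisive obstacle.'' Concretely: the chain argument \`a la Rassias and \v Semrl can plausibly transfer integer (and, with the $|\det|$ rigidity of Lemma \ref{seged1}, perhaps rational) values of the $n$-distance across $\phi$, but the passage from a dense set of preserved values to \emph{all} values via ``a density argument on the $n$-distance thresholds'' requires some form of continuity or monotonicity of $x\mapsto\|\phi(x_1)-\phi(x_0),\dots\|$, which an a priori arbitrary surjection does not have. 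This is exactly the point at which the classical Rassias--\v Semrl theorem stops short of a genuine isometry in the $1$-normed case (one only gets $\bigl|\,\|f(x)-f(y)\|-\|x-y\|\,\bigr|<1$ there), so asserting that ``extra rigidity'' closes the gap for $n\geq 2$ is precisely the claim that needs a proof. Also, your appeal to ``volume-preserving shears'' presupposes that shears in the domain correspond to something controllable in the range, which is unavailable before linearity is established.

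The paper avoids the metric upgrade altogether and argues through affine geometry instead: it shows that $\phi$ (and $\phi^{-1}$) is a \emph{lineation}. For $n\geq 3$ this is immediate, since an $n$SDOPP bijection preserves affine independence of triples in both directions; for $n=2$ one assumes three collinear points have non-collinear images, places a fourth image point forming two unit-area triangles (via \cite[Theorem 3.1]{2010_nn}), pulls the configuration back with the $\Leftarrow$ direction of \eqref{nSDOPP_eq}, and derives a contradiction from ($n$N3) and ($n$N5). The fundamental theorem of affine geometry then gives that $\phi$ is affine, and once $\phi$ is affine the homogeneity ($n$N3) upgrades preservation of the \emph{unit} $n$-distance to preservation of every $n$-distance for free --- which is the step your proposal cannot reach. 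So the route you chose is genuinely different, and it is different precisely where it breaks down.
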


In the proof we will apply the fundamental theorem of affine geometry, and as a consequence, we will obtain a Lester--Martin type theorem in Corollary \ref{nSDOPP_cor}.



\section{Proofs}

We begin with stating the fundamental theorem of projective geometry, in the version in which it will be needed. 
For a real vector space $X$, we denote the \emph{projectivised space} (i.e.~the set of all one-dimensional subspaces) by $P(X)$. 
The element of $P(X)$ generated by $0\neq x\in X$ will be denoted by $[x] := \R\cdot x$. 
In general, if $M\subset X$, then $[M]$ will denote the subspace generated by the set $M$. 
If $L\subseteq X$ is a two-dimensional subspace, then $[L]$ is called a \emph{projective line}. 
The following theorem is a special case of \cite[Theorem 3.1]{Fa} (namely, when $W=\{0\}$ and $K_1 = K_2 = \R$).

\begin{theorem}[The fundamental theorem of projective geometry]\label{proj_thm}
Let $X$ and $Y$ be two real vector spaces of dimensions at least three.
Let us consider an arbitrary (not necessarily surjective) transformation $g\colon P(X)\to P(Y)$ such that the following conditions are satisfied:
\begin{itemize}
\item[(i)] $\ran g$ is not contained in a projective line,
\item[(ii)] $0\neq c\in[a,b]$ $(a\neq 0 \neq b)$ implies $g([c])\subseteq \big[g([a]),g([b])\big]$.
\end{itemize}
Then there exists an injective linear transformation $A\colon X\to Y$ such that we have
\[
g([x]) = [Ax] \quad (0\neq x\in X).
\]
Moreover, $A$ is unique up to a non-zero scalar factor. 
\end{theorem}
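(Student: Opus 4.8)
The plan is to reconstruct the linear map $A$ from the incidence-theoretic data carried by $g$, following the classical coordinatisation that underlies the fundamental theorem; the special feature of the real scalar field is that the field endomorphism accompanying a general semilinear reconstruction is forced to be the identity, which upgrades the conclusion from semilinear to genuinely linear. As a preliminary step I would use (i) and (ii) to pin down the non-degeneracy of $g$. Condition (ii) says that collinear triples of projective points are sent to collinear triples, so $g$ maps every projective line into a projective line; condition (i) rules out the degenerate case and lets one produce a projective frame whose image is again a frame. Concretely, since $\dim X\geq 3$ one can fix points $[e_0],[e_1],[e_2]$ spanning a projective plane, together with a unit point, and choose representatives $f_i$ of their images that are linearly independent in $Y$. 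This is exactly where $\dim\geq 3$ is needed: on a projective line ($\dim X=2$) condition (ii) is vacuous and the statement is false.

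The heart of the argument, and the step I expect to be the main obstacle, is to recover the field operations of $\R$ geometrically and to verify that $g$ respects them. Working inside the projective plane determined by the chosen frame, and fixing reference points playing the roles of $0$, $1$ and $\infty$ on a line, one expresses the sum $s+t$ and the product $st$ of two scalar coordinates through finite chains of collinearity relations among auxiliary points constructed off that line (this is von Staudt's ``algebra of throws''; the required concurrences are valid because the plane, coming from a genuine vector space, is Desarguesian). Since $g$ preserves all these collinearity relations, the induced map $\sigma\colon\R\to\R$ on scalar coordinates satisfies $\sigma(s+t)=\sigma(s)+\sigma(t)$ and $\sigma(st)=\sigma(s)\sigma(t)$; being nonzero, $\sigma$ is an injective field endomorphism of $\R$. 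The delicate points are carrying out the constructions cleanly and, because $g$ is \emph{not} assumed surjective, making sure that every auxiliary point and its image are genuinely supplied by $g$ rather than merely postulated to exist.

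I would then invoke the rigidity of the real field: $\R$ has no field endomorphism other than the identity. Indeed such a $\sigma$ fixes $\Q$ pointwise, sends squares to squares and hence nonnegative numbers to nonnegative numbers, so it is monotone; a monotone field map fixing $\Q$ is the identity by density of $\Q$ in $\R$. Thus $\sigma=\operatorname{id}$ and the reconstructed semilinear map is in fact linear. Assembling the pieces, I extend the assignment $[e_i]\mapsto[f_i]$ over a full basis of $X$ and define $A$ linearly by $Ae_i=f_i$; the coordinatisation then yields $g([x])=[Ax]$ for every $0\neq x\in X$. Injectivity of $A$ is forced by the non-degeneracy established at the start, since a nontrivial kernel would collapse the frame and push the image into a smaller subspace, contradicting (i).

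For uniqueness, suppose $A'$ also induces $g$. For each $x\neq 0$ there is a scalar $\lambda(x)$ with $A'x=\lambda(x)Ax$. Taking two linearly independent vectors $x,y$, the images $Ax,Ay$ are independent, and comparing $A'(x+y)=\lambda(x+y)(Ax+Ay)$ with $A'x+A'y=\lambda(x)Ax+\lambda(y)Ay$ forces $\lambda(x)=\lambda(y)=\lambda(x+y)$. Since any two nonzero vectors can be joined by such a chain (using $\dim X\geq 3$ to avoid degeneracies), $\lambda$ is a single nonzero constant, which is the claimed uniqueness up to a scalar factor.
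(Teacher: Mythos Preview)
The paper does not give its own proof of this theorem. It is stated as a special case of \cite[Theorem~3.1]{Fa} (with $W=\{0\}$ and $K_1=K_2=\R$), and the only argument the paper supplies is the one-line remark following the statement: the general version over division rings produces a \emph{semilinear} map, and since the only field endomorphism of $\R$ is the identity, this semilinear map is in fact linear.

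Your proposal is a correct high-level sketch of the classical proof that underlies the cited reference: fix a frame, reconstruct the field operations via von Staudt's algebra of throws, obtain an accompanying field endomorphism $\sigma$ of $\R$, and then use the rigidity of $\R$ (order-preserving via squares, fixing $\Q$, hence the identity) to conclude linearity; the uniqueness argument is also standard and correct. The one step that you and the paper both make explicit is precisely this rigidity-of-$\R$ observation. Everything else you wrote goes well beyond what the paper does, since the paper is content to quote Faure for the existence of the semilinear map. Your caveat about non-surjectivity (ensuring the auxiliary points in the throw constructions are actual images under $g$) is the genuinely non-trivial part of Faure's argument and is correctly flagged, though of course not carried out in your sketch.
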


In fact, a generalised version of this theorem is stated in \cite{Fa} for vector spaces over division rings, where the existence of a semi-linear map is shown. 
But, as the only endomorphism of $\R$ is the identity, here all semi-linear maps are linear. 

Before we prove Theorem \ref{pmlin_nn_thm} we provide the following lemma, which could be generalised for $n$-norms on $n$-dimensional spaces in a similar way, however, we only need this special case here.

\begin{lem}\label{seged1}
	In a two-dimensional space every 2-norm is a non-zero scalar multiple of $\Area_2(\cdot,\cdot)$.
\end{lem}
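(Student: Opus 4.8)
The plan is to fix a basis $e_1, e_2$ of the two-dimensional space $X$ and to show that the value of any $2$-norm $\|\cdot,\cdot\|$ on an arbitrary pair of vectors is completely determined by the single number $c := \|e_1,e_2\| > 0$ (positivity follows from ($2$N1), since $e_1,e_2$ are linearly independent), and that this forces $\|\cdot,\cdot\| = c\cdot\Area_2(\cdot,\cdot)$ once we equip $X$ with the inner product making $e_1,e_2$ orthonormal. First I would handle the degenerate case: if $x_1, x_2$ are linearly dependent, then both $\|x_1,x_2\|$ and $\Area_2(x_1,x_2)$ vanish by ($2$N1), so the identity holds trivially. So assume $x_1 = a e_1 + b e_2$ and $x_2 = p e_1 + q e_2$ are linearly independent, i.e. $aq - bp \neq 0$.

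The key computational step is to reduce $\|x_1,x_2\|$ to $\|e_1,e_2\|$ by exploiting the multilinear-like behaviour encoded in ($2$N2)--($2$N5). Using bilinearity-type manipulations: by ($2$N5), adding to one slot a multiple of the other slot does not change the value, and by ($2$N3) we can pull scalars out of a fixed slot. Concretely, I would perform a "Gaussian elimination'': since $aq - bp \neq 0$, at least one of $a, b$ is nonzero; say $a \neq 0$. Then using ($2$N5) we may replace $x_2$ by $x_2 - \tfrac{p}{a} x_1 = \big(q - \tfrac{pb}{a}\big) e_2 = \tfrac{aq-bp}{a}\, e_2$ without changing $\|x_1, x_2\|$, and then $x_1$ by $x_1 - \tfrac{ab}{aq-bp}\big(x_2 - \tfrac{p}{a}x_1\big) = a e_1$, again without changing the value. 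Hence, by ($2$N3) and ($2$N2),
\[
\|x_1,x_2\| = \Big\| a e_1, \tfrac{aq-bp}{a} e_2 \Big\| = |a|\cdot\Big|\tfrac{aq-bp}{a}\Big|\cdot\|e_1,e_2\| = |aq-bp|\cdot c,
\]
and the same elementary linear algebra shows $\Area_2(x_1,x_2) = |aq-bp|$ (the determinant of the coordinate matrix in the orthonormal basis $e_1,e_2$). Combining these gives $\|x_1,x_2\| = c\cdot\Area_2(x_1,x_2)$ for all $x_1,x_2\in X$. The case $b \neq 0$ is symmetric, interchanging the roles of $e_1$ and $e_2$.

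I do not expect a serious obstacle here; the only mild care needed is to make sure the reduction steps are legitimate applications of ($2$N5) (the vectors "added'' must indeed be dependent on the fixed slot, which they are, being scalar multiples of it) and that one correctly tracks which slot is being modified so that ($2$N5) applies — ($2$N5) changes the first slot while the remaining slots stay fixed, and here the "remaining slot'' is genuinely held fixed at each step, so after also invoking the symmetry ($2$N2) to swap slots when needed, everything goes through. The identification of $\Area_2$ with the absolute value of the $2\times 2$ determinant in an orthonormal basis is the standard formula for the area of a parallelogram and needs no further comment.
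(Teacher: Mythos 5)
Your proposal is correct and follows essentially the same route as the paper: the author likewise performs the elimination $\|\alpha_1 x_1+\alpha_2 x_2,\beta_1 x_1+\beta_2 x_2\|=|\det|\cdot\|x_1,x_2\|$ using ($n$N5) and ($n$N3) in a fixed basis, splitting into the cases $\beta_1\neq 0$, $\beta_1=0\neq\beta_2$, and $\beta_1=\beta_2=0$ exactly as you treat $a\neq 0$, $b\neq 0$, and the degenerate case. Your identification of $\Area_2$ via the inner product making the chosen basis orthonormal is the same implicit normalisation the paper uses, so no further comment is needed.
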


\begin{proof}
	The statement is clear from the following calculation:
	\begin{align*}
	\big\|\alpha_1 x_1 + \alpha_2 x_2 ,\beta_1 x_1 + \beta_2 x_2\big\| & = \left\|\left(\alpha_2-\frac{\alpha_1}{\beta_1}\beta_2\right) x_2,\beta_1 x_1 + \beta_2 x_2\right\| \\
	= \left|\alpha_2-\frac{\alpha_1}{\beta_1}\beta_2\right| \cdot \big\|x_2,\beta_1 x_1 + \beta_2 x_2\big\| & = \left|\alpha_2-\frac{\alpha_1}{\beta_1}\beta_2\right| \cdot \big\|x_2,\beta_1 x_1\big\| \\
	& = \left|\det\left(
	\begin{matrix}
	\alpha_1 & \alpha_2\\
	\beta_1 & \beta_2\\
	\end{matrix}
	\right)\right| \cdot \big\|x_1, x_2\big\|^{(2)}_{[x_1, x_2]}
	\end{align*}
	whenever $\beta_1\neq 0$. 
	If $\beta_1 = 0 \neq \beta_2$, by interchanging the role of $x_1$ and $x_2$, we obtain the same equation. 
	Finally, if $\beta_1 = \beta_2 = 0$, then the above equation is trivially fulfilled. 
\end{proof}

Now, we are in the position to prove our first main result.

\begin{proof}[Proof of Theorem \ref{pmlin_nn_thm}]
By \eqref{nNP_eq}, $\phi$ preserves linear independence of $n$ vectors in both directions. 
We observe that this is true for $k$ vectors $(2\leq k < n)$ as well.
Indeed, on the one hand, it is obvious that $\phi$ preserves linear independence of $k$ vectors $(2\leq k < n)$ in one direction. 
On the other hand, if $\phi(x_1),\dots \phi(x_k)$ are linearly independent $(2\leq k < n)$, then there are some vectors $\phi(x_{k+1}),\dots \phi(x_n)$ such that the system $\phi(x_1),\dots \phi(x_n)$ is still linearly independent, since otherwise $\ran\phi$ would be contained in a subspace of dimension less than $n$.
Thus $x_1,\dots x_k$ are linearly independent too.

We define the \emph{projectivisation} of $\phi$ in the following way:
\begin{equation}\label{proj-ed_eq}
P_\phi\colon P(X)\to P(Y), \quad P_\phi([x]) = [\phi(x)].
\end{equation}
By the above observations it is apparent that $P_\phi$ is well-defined and that it satisfies the conditions of Theorem \ref{proj_thm}. 
Therefore we obtain an injective linear transformation $A\colon X\to Y$ (which is unique up to a scalar multiple) that satisfies 
\begin{equation}\label{phi-A2_eq}
[\phi(x)] = [Ax] \quad (0\neq x\in X),
\end{equation}
and which brings us one step closer to conclude the plus-minus linearity of $\phi$.

Next, we consider two arbitrary linearly independent vectors $x_1,x_2\in X$.
We set some other vectors $x_3,\dots x_n\in X$ such that the system $x_1,\dots x_n$ is still linearly independent. 
By the above observations we have
$$
\phi(x)\in[\phi(x_1),\phi(x_2)] \;\iff\; x\in[x_1, x_2] \quad (x\in X).
$$
It is quite straightforward that 
\[
\big\|\cdot,\cdot\big\|^{(2)}_{[x_1, x_2]} \colon
[x_1, x_2] \to \R_+, \quad
\big\|z_1,z_2\big\|^{(2)}_{[x_1, x_2]} := \|z_1,z_2,x_3,\dots, x_n\|
\]
and 
\begin{align*}
\big\|\cdot,\cdot\big\|^{(2)}_{[\phi(x_1), \phi(x_2)]} \colon &
[\phi(x_1), \phi(x_2)] \to \R_+, \\
& \big\|u_1,u_2\big\|^{(2)}_{[\phi(x_1), \phi(x_2)]} := \|u_1, u_2,\phi(x_3),\dots, \phi(x_n)\|
\end{align*}
define 2-norms on the subspaces $[x_1, x_2]$ and $[\phi(x_1), \phi(x_2)]$, respectively. 
Observe that the restriction 
\[
\phi|_{[x_1, x_2]}\colon [x_1, x_2] \to [\phi(x_1), \phi(x_2)]
\] 
satisfies
\[
\big\|\phi(z_1), \phi(z_2)\big\|^{(2)}_{[\phi(x_1), \phi(x_2)]} 
= \big\|z_1, z_2\big\|^{(2)}_{[x_1, x_2]} \qquad (z_1, z_2\in[x_1, x_2]).
\]
Now, an easy application of Lemma \ref{seged1} and \cite[Theorem 1 (i)]{G6} gives a bijective linear map $A_{[x_1, x_2]}\colon [x_1,x_2] \to [\phi(x_1),\phi(x_2)]$ such that
\begin{equation}\label{phi-A_eq}
\phi(u) \in \{A_{[x_1, x_2]}u, -A_{[x_1, x_2]}u\} \quad (u\in [x_1,x_2]),
\end{equation}
and this holds for every two linearly independent vectors $x_1,x_2\in X$. 
Clearly, by \eqref{phi-A_eq} and \eqref{phi-A2_eq},
\begin{equation}\label{proj_eq}
[Au] = [A_{[x_1, x_2]}u] \quad (u\in[x_1, x_2]).
\end{equation}
holds for every two linearly independent vectors $x_1, x_2\in X$. 
We claim that there is a non-zero constant $c_{[x_1, x_2]}$ such that we have 
\begin{equation}\label{const_2dim_eq}
c_{[x_1, x_2]}\cdot A|_{[x_1, x_2]} = A_{[x_1, x_2]}.
\end{equation}
Indeed, by \eqref{proj_eq} we have $A_{[x_1, x_2]}x_j = \alpha_j A x_j$ $(j=1,2)$ with some constants $\alpha_j\in \R\setminus\{0\}$, whence we get 
\begin{align*}
[A(x_1+x_2)] & = [Ax_1 + Ax_2] = \left[\tfrac{1}{\alpha_1}A_{[x_1, x_2]}x_1 + \tfrac{1}{\alpha_2}A_{[x_1, x_2]}x_2\right] \\
& = \left[A_{[x_1, x_2]}\left(\tfrac{1}{\alpha_1}x_1 + \tfrac{1}{\alpha_2} x_2\right)\right] 
= \left[A\left(\tfrac{1}{\alpha_1}x_1 + \tfrac{1}{\alpha_2} x_2\right)\right].
\end{align*}
But $A$ is injective, therefore we get $\alpha_1 = \alpha_2$. 
If we set $c_{[x_1, x_2]} = \alpha_1$, then this constant satisfies \eqref{const_2dim_eq}.

Finally, let us consider two pieces of two-dimensional subspaces $F_1$ and $F_2$ of $X$. 
If $\{0\}\neq F_1\cap F_2\neq F_1$, then by \eqref{phi-A_eq} and \eqref{const_2dim_eq} we obtain $c_{F_2} \in \{c_{F_1},-c_{F_1}\}$. 
If $\{0\} = F_1\cap F_2$, then there exists a third two-dimensional subspace $F_3$ such that $\{0\}\neq F_j\cap F_3\neq F_3$ $(j=1,2)$ holds, and applying the previous case we get $c_{F_2} \in \{c_{F_1},-c_{F_1}\}$. 
Since $\tfrac{1}{c}\cdot A$ ($c\neq 0$) also fulfils \eqref{proj_eq}, we may suppose without loss of generality that $c_{F}\in \{-1,1\}$ holds for every two-dimensional subspace $F\subseteq X$, and thus by \eqref{phi-A_eq} and \eqref{const_2dim_eq} $\phi$ is plus-minus linear.
\end{proof}

The following corollary is a supplementary result to \cite[Theorem 1]{G6}, where we do not have to assume completeness of the spaces, nor bijectivity of $\phi$.

\begin{cor}\label{T:RW_n_cor}
Let $E$ and $F$ be real inner product spaces, $n\in \N, 3 \leq n \leq \dim E$, and $\phi\colon E\to F$ be a transformation which satisfies
\begin{equation}\label{nVPP_eq}\tag{$n$VPP}
\Area_n(x_1,\dots, x_n) = \Area_n(\phi(x_1),\dots, \phi(x_n)) \quad (x_1,\dots x_n \in X).
\end{equation}
Then we have the following conclusions:
\begin{itemize}
\item[(i)] If $\dim E = n$, then there exist a function $\epsilon\colon E\to\{-1,1\}$ and an equiaffine linear transformation $A\colon E\to F$ such that the following holds:
\[
\phi(x) = \epsilon(x)Ax \qquad (x \in E).
\]
\item[(ii)] If $n < \dim E$, then there exist a function $\epsilon\colon E\to\{-1,1\}$ and a linear (not necessarily surjective) isometry $R\colon E\to F$ such that
\[ 
\phi(x) = \epsilon(x)Rx \qquad (x \in E) 
\]
is satisfied.
\end{itemize}
\end{cor}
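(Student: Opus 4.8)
The plan is to deduce the corollary from Theorem~\ref{pmlin_nn_thm} and then determine which linear maps can preserve $n$-volume. First I would record that $\Area_n(\cdot,\dots,\cdot)$ obeys ($n$N1)--($n$N4), so $(E;\Area_n)$ is an $n$-normed space; since $\dim E\geq n$ some $n$-tuple in $E$ has positive $n$-volume, so \eqref{nVPP_eq} excludes $\dim F\leq n-1$ and $(F;\Area_n)$ is an $n$-normed space as well. Because $n\geq 3$, Theorem~\ref{pmlin_nn_thm} applies and produces a map $\epsilon\colon E\to\{-1,1\}$ and an injective linear map $A\colon E\to F$ with $\phi(x)=\epsilon(x)Ax$ for all $x\in E$ (injectivity of $A$ is part of the construction in the proof of Theorem~\ref{pmlin_nn_thm}; alternatively $\phi$ cannot vanish on $E\setminus\{0\}$, so neither can $A$). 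By the homogeneity axiom ($n$N3), $\Area_n(\phi(x_1),\dots,\phi(x_n))=\Area_n(Ax_1,\dots,Ax_n)$, hence $A$ itself satisfies \eqref{nVPP_eq}; everything now reduces to analysing the \emph{linear} map $A$.

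From this point on I would use only the identity $\Area_n(v_1,\dots,v_n)^2=\det\big(\langle v_i,v_j\rangle\big)_{i,j=1}^{n}$ available in any real inner product space. For part~(i), where $\dim E=n$: in an orthonormal basis $e_1,\dots,e_n$ of $E$ and an orthonormal basis of the $n$-dimensional subspace $\ran A$, let $M$ be the matrix of $A$; then $1=\Area_n(e_1,\dots,e_n)=\Area_n(Ae_1,\dots,Ae_n)=\sqrt{\det(M^{\top}M)}=|\det M|$, which says precisely that $A$ is equiaffine, and $\phi(x)=\epsilon(x)Ax$ has the desired shape.

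For part~(ii), where $n<\dim E$, I would fix an arbitrary $(n+1)$-dimensional subspace $W\subseteq E$ --- this is the only place the strict inequality enters --- and set $B:=(A|_W)^{*}(A|_W)\colon W\to W$, a positive definite self-adjoint operator. Taking an orthonormal eigenbasis $e_1,\dots,e_{n+1}$ of $W$ with $Be_i=\mu_i e_i$, $\mu_i>0$, one gets $\langle Ae_i,Ae_j\rangle=\mu_i\delta_{ij}$. Applying \eqref{nVPP_eq} to the orthonormal $n$-tuple obtained from $e_1,\dots,e_{n+1}$ by deleting $e_k$ yields $\prod_{i\neq k}\mu_i=1$ for every $k$; comparing two such products forces $\mu_j=\mu_k$ for all $j,k$, and then the common value $\mu$ satisfies $\mu^n=1$, so all $\mu_i=1$. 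Thus $B=\mathrm{id}_W$ and $A|_W$ is an isometric embedding. As any two vectors of $E$ lie in a common $(n+1)$-dimensional subspace, $A$ preserves inner products, so it is a linear (not necessarily surjective) isometry $R$, and $\phi(x)=\epsilon(x)Rx$.

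The routine part is the reduction and the Gram-determinant bookkeeping; the one step that carries the actual content is the rigidity of the linear part. In part~(ii) restricting to finite-dimensional $W$ is essential --- it makes the eigenvalue argument available even when $E$ is infinite-dimensional and $A$ is not assumed continuous --- and the elementary combinatorial fact ``if $\prod_{i\neq k}\mu_i=1$ for all $k$ then every $\mu_i=1$'' is exactly what needs $\dim W=n+1$, so that the deleted $n$-tuples are still orthonormal. (Equivalently, one could argue that $\wedge^{n}(A|_W)$ preserves the norm of every element of $\wedge^{n}W$ --- all of which are decomposable when $\dim W=n+1$ --- hence is an isometry, and read off the singular values from there.)
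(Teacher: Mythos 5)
Your proposal is correct and follows essentially the same route as the paper: reduce to a linear map via Theorem~\ref{pmlin_nn_thm}, handle $\dim E=n$ by the Gram determinant, and for $n<\dim E$ restrict to an arbitrary $(n+1)$-dimensional subspace and run the ``delete one basis vector'' product argument on the singular values. Your use of the eigenvalues of $(A|_W)^{*}(A|_W)$ is just the paper's polar/singular value decomposition step in slightly different clothing (your $\mu_i$ are the squares of the paper's $d_i$), so there is nothing substantive to distinguish the two arguments.
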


\begin{proof}
By Theorem \ref{pmlin_nn_thm}, we immediately obtain the existence of a function $\epsilon\colon E\to\{-1,1\}$ such that $\epsilon(\cdot)\phi(\cdot)$ is linear. 
Since $\phi(\cdot)$ fulfilles the conditions of our statement if and only if $\epsilon(\cdot)\phi(\cdot)$ does, there is no loss of generality if we assume that $\phi$ is linear.
If $\dim E = n$, then the statement is clear.

Now, we assume $2 < n < \dim E$, and show that $\phi$ is an isometry.
Let us consider the restriction $\psi := \phi|_H\colon H \to \phi(H)$ into an arbitrary $(n+1)$-dimensional subspace $H\subseteq E$, where clearly $\dim\phi(H) = n+1$. 
Let us observe that whenever $U\colon \phi(H)\to H$ is an arbitrary linear isometry, then $\psi(\cdot)$ satisfies \eqref{nVOPP_eq} if and only if $U(\psi(\cdot))\colon H\to H$ does. 
By the polar decomposition (or singular value decomposition), there exists a suitable $U$ such that $U(\psi(\cdot))$ has a diagonal matrix representation with positive diagonal elements, in some orthonormal base $h_1,\dots h_{n+1}$ of $H$. 
Thus we may assume without loss of generality that $\psi(h_j) = d_j h_j$ is satisfied with some $d_j>0$ $(j=1,\dots n+1)$. 
Since $\Area_n(h_1,\dots, h_n) = 1 = \Area_n(h_2,\dots, h_{n+1})$, we obtain
\[
d_1\cdot\dots\cdot d_n = \Area_n(d_1 h_1,\dots, d_n h_n) = 1 = \Area_n(h_2,\dots, h_{n+1}) = d_2\cdot\dots\cdot d_{n+1},
\]
which further implies $d_1 = d_{n+1}$. 
Similarly, we get $d_1 = d_2 = \dots = d_{n+1}$, but this is possible only in the case when $d_1 = \dots = d_{n+1} = 1$, i.e. when $\phi|_H$ is an isometry. 
Since $H$ was arbitrary, our map is indeed an isometry.
\end{proof}

Next, we state a strong version of the fundamental theorem of affine geometry below, which is a special case of \cite[Theorem 2.1]{HuSe}. We call a mapping $\eta \colon X\to Y$ between two real vector spaces a \emph{lineation} if it maps any three collinear points of $X$ into collinear points of $Y$. 
It is straightforward to show that if $\eta$ is injective, then it is a lineation if and only if we have 
\[
\phi(\Aff(a,b))\subseteq\Aff(\phi(a),\phi(b)) \quad (a,b\in X, a\neq b),
\]
where $\Aff(M)$ denotes the affine subspace generated by $M \subseteq X$. 

\begin{theorem}[The fundamental theorem of affine geometry, \cite{HuSe}]\label{aff_thm}
Let $\eta \colon \R^2\to\R^2$ be an injective lineation whose range is not contained in any affine line. 
Then $\eta$ is an injective affine transformation.
\end{theorem}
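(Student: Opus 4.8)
The plan is to reduce the statement to the fundamental theorem of projective geometry, Theorem~\ref{proj_thm}, by extending $\eta$ to the projective completion of the plane. I will write $\overline{ab}$ for the affine line through distinct points $a,b$; since $\eta$ is an injective lineation one has $\eta(\overline{ab})\subseteq\overline{\eta(a)\eta(b)}$, and I denote by $m_\ell$ the unique line containing $\eta(\ell)$ for an affine line $\ell$ (unique because $\eta(\ell)$ contains at least two points, by injectivity).

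First I would check that $\eta$ neither destroys nor creates collinearity, equivalently that it maps a non-collinear triple $a,b,c$ to a non-collinear triple. If not, say $\eta(a),\eta(b),\eta(c)$ all lie on a line $m$, then for an arbitrary point $x$ one of the three lines $\overline{xa},\overline{xb},\overline{xc}$ meets the corresponding opposite side of the triangle $abc$ — at most two of them can be parallel to that side — so, choosing such a line (say $\overline{xa}$ meeting $\overline{bc}$ at $y$), the points $x,a,y$ are collinear and hence $\eta(x)$ lies on the line through $\eta(a)$ and $\eta(y)$, both of which lie on $m$; therefore $\ran\eta\subseteq m$, contradicting the hypothesis.

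The core step, which I expect to be the main obstacle, is to prove that $\eta$ preserves parallelism: $\ell_1\parallel\ell_2$ implies $m_{\ell_1}\parallel m_{\ell_2}$. I would do this through parallelogram and Desargues-type configurations: for a parallelogram with two opposite sides on $\ell_1$ and $\ell_2$, the images of the four vertices inherit all the incidences of the figure (for instance the image of the common midpoint of the two diagonals must be the intersection point of the two image diagonals), and assuming $m_{\ell_1}\cap m_{\ell_2}\neq\emptyset$ and propagating such a configuration over a whole pencil of parallels forces the ambient image lines to pass through a common point while having pairwise distinct directions, which is incompatible with the incidence pattern. The delicate point is that this uses the validity of the theorems of Desargues and Pappus in $\R^2$, which hold in the real plane but fail in a general abstract affine plane — this is exactly why the two-dimensional case is harder than the higher-dimensional one.

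Finally, I would realise $\R^2$ as the affine plane $H=\{(t_1,t_2,1):t_i\in\R\}\subseteq\R^3$, whose projective completion is $P(\R^3)$ with line at infinity $P(W)$, $W=\{t_3=0\}$, whose points are the directions of affine lines. By the previous step the rule "direction of $\ell$ $\mapsto$ direction of $m_\ell$" is well defined, so $\eta$ extends to a map $\hat\eta\colon P(\R^3)\to P(\R^3)$ carrying $P(W)$ into itself. Using the first two steps one verifies condition~(ii) of Theorem~\ref{proj_thm} for $\hat\eta$ — any three collinear projective points are either three affine points, or two affine points together with the point at infinity of the line joining them, or three points of $P(W)$, and in each case the images are collinear — while $\ran\hat\eta$ is not contained in a projective line because $\ran\eta$ is not contained in an affine line. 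Theorem~\ref{proj_thm} then provides an injective linear map $A\colon\R^3\to\R^3$ with $\hat\eta([v])=[Av]$ for all $v\neq 0$; since $\hat\eta(P(W))\subseteq P(W)$, the subspace $W$ is $A$-invariant, so $A$ is block upper triangular with an invertible $2\times 2$ block and non-zero $(3,3)$-entry, and reading off the action induced on $H$ exhibits $\eta$ as an invertible affine transformation of $\R^2$, in particular an injective affine map.
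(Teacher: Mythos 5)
First, a point of comparison: the paper does not prove Theorem~\ref{aff_thm} at all --- it is imported verbatim as a special case of \cite[Theorem 2.1]{HuSe}, so there is no internal proof to measure your attempt against. Your overall strategy (show that non-collinear triples go to non-collinear triples, show that parallelism is preserved, extend $\eta$ to the projective completion $P(\R^3)$ and invoke Theorem~\ref{proj_thm}) is the classical route, and two of your three steps are sound. The first step is correct: the only point needing care is the claim that at most two of the lines $\overline{xa},\overline{xb},\overline{xc}$ can be parallel to the respective opposite sides, and a short computation (writing everything in the basis $b-a$, $c-a$ and checking that all three parallelism conditions force $a,b,c$ collinear) confirms it, with the degenerate case $x$ on a side-line handled directly. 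The third step is also correct as written: granted that parallels go to parallels, the extension $\hat\eta$ is well defined on the line at infinity, conditions (i) and (ii) of Theorem~\ref{proj_thm} follow by the case analysis you describe, and the block-triangular form of $A$ descends to an injective affine map on the affine chart.

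The genuine gap is your second step, and unfortunately it is where the entire content of the theorem lives. You assume $m_{\ell_1}\cap m_{\ell_2}\neq\emptyset$ for a pair of parallels and assert that propagating a parallelogram configuration over the pencil ``forces the ambient image lines to pass through a common point while having pairwise distinct directions, which is incompatible with the incidence pattern.'' No incompatibility has been exhibited: a pencil of concurrent lines with pairwise distinct directions is a perfectly consistent configuration in $\R^2$, and the midpoint/diagonal incidences you list (the image of the centre is the intersection of the two image diagonals, etc.) are all satisfiable a priori. One can push a little further --- for instance, if every line of the pencil maps into a line through a common point $w$, an injectivity argument along a transversal $t$ shows $w\notin m_t$ --- but this still yields no contradiction, and the known proofs of exactly this step (Schaeffer \cite{Sch}, Carter--Vogt \cite{CaVo}, Huang--\v Semrl \cite{HuSe}) are long precisely because one-directional, non-surjective collinearity preservation in dimension two gives so little to work with; they must also ultimately use that $\R$ admits no nontrivial field endomorphism, since over other fields the semi-affine counterexamples are real. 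Your instinct that Desargues/Pappus configurations must enter is right, but as written the argument is a statement of intent rather than a proof, so the proposal does not establish the theorem.
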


However, we want to use Theorem \ref{aff_thm} for arbitrary real vector spaces. 
This extension can be obtained quite straightforwardly, as presented below. 

\begin{cor}\label{aff_cor}
Let $X$ and $Y$ be two real, at least two-dimensional vector spaces, and suppose that $\eta\colon X\to Y$ is an injective lineation whose range is not contained in an affine line. 
Then $\eta$ is an injective affine transformation.
\end{cor}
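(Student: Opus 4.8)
The plan is to reduce to the two-dimensional situation treated in Theorem \ref{aff_thm}, via a propagation argument over the family of two-dimensional affine subspaces (``2-planes'') of $X$. Note first that injectivity of $\eta$ together with the lineation property immediately gives that $\eta$ maps each line $\Aff(a,b)$ ($a\neq b$) into the line $\Aff(\eta(a),\eta(b))$, since $\eta(a)\neq\eta(b)$.

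The first ingredient I would isolate is a \emph{parallelogram lemma}: if $P$ is a 2-plane and $a,b,c\in P$ have $\eta(a),\eta(b),\eta(c)$ non-collinear, then $\eta(P)\subseteq\Pi:=\Aff(\eta(a),\eta(b),\eta(c))$, the latter being a 2-plane in $Y$ (note $a,b,c$ are then affinely independent, as collinear points would have collinear images). The proof is plane affine geometry applied to $x\in P$: if $x$ lies on one of $\Aff(a,b),\Aff(b,c),\Aff(a,c)$, the lineation property places $\eta(x)$ in $\Pi$ at once; otherwise one checks that the three conditions ``$\Aff(a,x)$ parallel to $\Aff(b,c)$'', ``$\Aff(b,x)$ parallel to $\Aff(a,c)$'', ``$\Aff(c,x)$ parallel to $\Aff(a,b)$'' cannot all hold (they would force $x=a+b-c$, for which the third fails), so for some vertex, say $b$, the line $\Aff(b,x)$ meets $\Aff(a,c)$ in a point $e$; then $\eta(e)\in\Pi$ and collinearity of $b,e,x$ forces $\eta(x)\in\Aff(\eta(b),\eta(e))\subseteq\Pi$. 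Call a 2-plane $P$ \emph{good} if $\eta(P)$ is not contained in any affine line; by the lemma, after fixing affine isomorphisms $P\cong\R^2$ and $\Pi\cong\R^2$, Theorem \ref{aff_thm} shows that $\eta|_P$ is an injective affine map for every good $P$.

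The second ingredient is a \emph{propagation step}: if $P$ is good and $P'$ is a 2-plane with $P\cap P'$ a line $L$, then $P'$ is good. Indeed $\eta|_L$ is injective affine, so $\eta(L)$ is a genuine line; were $\eta(P')$ contained in an affine line it would have to coincide with $\eta(L)$, whence any point of $P'\setminus L$ would be carried onto a point of $\eta(L)$, i.e.\ would coincide with the image of a point of $L$, contradicting injectivity. It then remains to show \emph{every} 2-plane is good, which follows by connectedness. The global hypothesis supplies one good plane $P_0$ (choose $p,q,r$ with $\eta(p),\eta(q),\eta(r)$ non-collinear; then $p,q,r$ are affinely independent and $P_0:=\Aff(p,q,r)$ is good). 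For an arbitrary point $x\notin P_0$, the plane $\Aff(u,v,x)$ spanned by a line $\Aff(u,v)\subseteq P_0$ and $x$ meets $P_0$ exactly in that line, hence is good by the propagation step; so every point lies in a good plane. Finally, for an arbitrary 2-plane $Q$, pick $x\in Q$ and a good plane $P\ni x$: either $P=Q$, or $P\cap Q$ is a line (apply propagation), or $P\cap Q=\{x\}$, in which case the plane spanned by a line of $P$ through $x$ and a (necessarily distinct) line of $Q$ through $x$ meets each of $P$ and $Q$ in a line, and two applications of the propagation step give that $Q$ is good.

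Once every 2-plane is good, $\eta$ is affine on each 2-plane, and since every segment $\Aff(a,b)$ lies in some 2-plane (extend by a point off the line, which exists as $\dim X\geq 2$), the identity $\eta\big((1-t)a+tb\big)=(1-t)\eta(a)+t\eta(b)$ holds for all $a,b\in X$ and $t\in\R$; with the assumed injectivity this is exactly the assertion. I expect the main obstacle to be the connectedness bookkeeping in the propagation argument --- verifying that from a single good plane one reaches every 2-plane through a short chain of 2-planes meeting pairwise in lines, and correctly disposing of the degenerate intersection cases (planes meeting only in a point, or one containing the other). The parallelogram lemma itself is routine, but its case distinction according to the position of $x$ must be carried out with some care.
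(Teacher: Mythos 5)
Your proof is correct, and its core reduction (restrict to $2$-planes, verify the image of a plane lands in a plane, then invoke Theorem \ref{aff_thm}) is the same as the paper's; the difference lies in how you globalize. The paper handles an arbitrary pair $a\neq b$ directly: since $\ran\eta$ is not contained in the affine line $\Aff(\eta(a),\eta(b))$, one picks $c$ with $\eta(a),\eta(b),\eta(c)$ affinely independent, applies the planar theorem to $\eta|_{\Aff(a,b,c)}$, and concludes at once that affine combinations of $a$ and $b$ are preserved --- no notion of ``good'' planes and no propagation over chains of planes meeting in lines is needed, because the auxiliary point $c$ is chosen afresh for each pair. Your connectedness argument is valid (the degenerate cases are disposed of correctly), but it is work the hypothesis lets you avoid. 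On the other hand, your ``parallelogram lemma'' makes explicit a step the paper only asserts, namely the containment $\eta(\Aff(a,b,c))\subseteq\Aff(\eta(a),\eta(b),\eta(c))$ for an injective lineation, and your case analysis (connecting an interior point to a vertex by a line meeting the opposite side, ruling out the three simultaneous parallelisms) is exactly the justification that containment requires. So your write-up is longer where it could be shorter, and more complete where the paper is terse; both yield the corollary.
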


\begin{proof}
Let $a$ and $b$ be two different points in $X$. 
Since $\ran \eta \subsetneq \Aff(\eta(a),\eta(b))$, there exists a vector $c\in X$ such that $\eta(a), \eta(b)$ and $\eta(c)$ are affine independent. 
Clearly, $a, b$ and $c$ must be also affine independent, moreover, we have $\eta(\Aff(a,b,c)) \subseteq \Aff(\eta(a),\eta(b),\eta(c))$.
We consider the restriction 
\[
\eta|_{\Aff(a,b,c)}\colon \Aff(a,b,c) \to \Aff(\eta(a),\eta(b),\eta(c)).
\]
By Theorem \ref{aff_thm}, the function $\eta|_{\Aff(a,b,c)}$ preserves all affine combinations of $a$ and $b$. 
Since $a$ and $b$ were arbitrary, this completes the proof. 
\end{proof}

At this point we point out that in the proof of Theorem \ref{nSDOPP_thm} we will only need the classical version of the fundamental theorem of affine geometry where the map is bijective and the preservation of collinearity is assumed in both directions.
The reason why we stated Theorem \ref{aff_thm} will be revealed right after Corollary \ref{nn_cor}.
We proceed with the verification of the following lemma.

\begin{lem}\label{inj&affind_lem}
Let $n \geq 2$, $X$ and $Y$ be two $n$-normed spaces, and $\phi\colon X\to Y$ be an $n$DOPP transformation. Then the following conditions are fulfilled:
\begin{itemize}
\item[(i)] for every $2 \leq k \leq n$, $\phi$ preserves affine independence of $k$ vectors in one direction,
\item[(ii)] $\phi$ is injective,
\item[(iii)] $\ran\phi$ is not contained in any affine line.
\end{itemize}
\end{lem}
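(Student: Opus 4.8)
The plan is to reduce all three items to a single elementary normalisation device. The condition \eqref{nDOPP_eq} only constrains $\phi$ on those $(n+1)$-tuples of points whose $n$ associated difference vectors have $n$-norm exactly $1$; but by ($n$N2) and ($n$N3) we may always rescale one of these difference vectors so that the $n$-norm becomes $1$. The catch is that we must rescale a vector pointing to a point whose image we do \emph{not} need to control, since nothing is known about $\phi$ at the rescaled point. So, given a system of difference vectors of interest, I would first enlarge it to a linearly independent $n$-tuple in $X$ — possible because $\dim X\geq n$ — rescale the extra vector, apply \eqref{nDOPP_eq}, deduce from ($n$N1) that the $n$ image difference vectors are linearly independent, and then discard the extra coordinate.

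For (i), fix $2\leq k\leq n$ and let $p_1,\dots,p_k\in X$ be affinely independent, i.e.\ $v_i:=p_{i+1}-p_1$ $(1\leq i\leq k-1)$ are linearly independent. Since $\dim X\geq n$, enlarge $v_1,\dots,v_{k-1}$ to a linearly independent family $v_1,\dots,v_n\in X$, put $c:=\|v_1,\dots,v_n\|>0$, and consider the $n+1$ points $p_1$, $p_1+v_1$, $\dots$, $p_1+v_{n-1}$, $p_1+\tfrac1c v_n$. Their $n$ difference vectors are $v_1,\dots,v_{n-1},\tfrac1c v_n$, with $n$-norm $\tfrac1c\cdot c=1$ by ($n$N2)--($n$N3), so \eqref{nDOPP_eq} together with ($n$N1) shows that
\[
\phi(p_1+v_1)-\phi(p_1),\ \dots,\ \phi(p_1+v_{n-1})-\phi(p_1),\ \phi\bigl(p_1+\tfrac1c v_n\bigr)-\phi(p_1)
\]
are linearly independent. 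Dropping the last of these $n$ vectors and recalling $p_1+v_i=p_{i+1}$ for $i\leq k-1$, we get that $\phi(p_2)-\phi(p_1),\dots,\phi(p_k)-\phi(p_1)$ are linearly independent, i.e.\ $\phi(p_1),\dots,\phi(p_k)$ are affinely independent.

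Part (ii) is the case $k=2$ of (i): if $a\neq b$, then $a,b$ are affinely independent, hence so are $\phi(a),\phi(b)$, and in particular $\phi(a)\neq\phi(b)$. For (iii), note that $\dim X\geq n\geq2$ provides three affinely independent points $a,b,c\in X$. If $n\geq3$, part (i) with $k=3$ immediately gives that $\phi(a),\phi(b),\phi(c)$ are affinely independent, so $\ran\phi$ lies in no affine line. If $n=2$ there is no spare coordinate, so I would rescale a genuine difference vector instead: with $d:=\|b-a,\,c-a\|>0$ one has $\|\tfrac1d(b-a),\,c-a\|=1$, and \eqref{nDOPP_eq} with ($n$N1), applied to the triple $a$, $a+\tfrac1d(b-a)$, $c$, yields three affinely independent points $\phi(a)$, $\phi\bigl(a+\tfrac1d(b-a)\bigr)$, $\phi(c)$ in $\ran\phi$; again $\ran\phi$ is contained in no affine line.

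The main obstacle is precisely this rescaling: one cannot normalise the $n$-norm by stretching a difference vector $p_j-p_1$ attached to a point $p_j$ whose image must be controlled, since $\phi$ is unknown at $p_1+t(p_j-p_1)$. Keeping a spare direction to rescale instead works because $\dim X\geq n$ while the relevant system of difference vectors has at most $n-1$ members (as $k\leq n$); in the borderline case $n=2$ of (iii) no spare direction is left, which is why there one only obtains three affinely independent images of a \emph{rescaled} triple rather than of $a,b,c$ — still enough for the conclusion.
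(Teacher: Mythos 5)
Your proposal is correct and follows essentially the same route as the paper: use $\dim X\geq n$ to append and rescale a spare vector so that the $n$-norm of the difference system equals $1$, apply \eqref{nDOPP_eq}, and conclude linear independence of the image differences via ($n$N1), with (ii) being the $k=2$ case of (i). The paper leaves (iii) as an easy consequence of \eqref{nDOPP_eq}; your explicit treatment of the $n=2$ case there is just a spelled-out version of the same observation.
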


\begin{proof}
Let $x_0, x_1, \dots x_{n-1} \in X$ be a system of affine independent vectors. 
Since $\dim X \geq n$, there exists a vector $x_n$ such that $\|x_1-x_0,\dots, x_n-x_0\| = 1$, and hence that $\|\phi(x_1)-\phi(x_0),\dots, \phi(x_n)-\phi(x_0)\| = 1$. 
This completes the case $k=n$ in (i).
For the $k<n$ case we simply find some vectors $x_k,\dots x_{n-1}\in X$ such that the system $x_0,\dots x_{k-1},x_k,\dots x_{n-1}$ is still affine independent.

We observe that (ii) simply means (i) in the $k = 2$ case, and that (iii) follows easily from \eqref{nDOPP_eq}.
\end{proof}

We have the following easy consequence of Lemma \ref{inj&affind_lem} and Corollary \ref{aff_cor}.

\begin{cor}\label{nn_cor}
Let $n\geq 2$, and assume that $\phi\colon X\to Y$ is a lineation which satisfies \eqref{nDOPP_eq}. 
Then $\phi$ is an affine $n$-isometry.
\end{cor}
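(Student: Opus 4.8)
The plan is to reach the conclusion in two stages: first promote $\phi$ to an affine injection via the fundamental theorem of affine geometry, and then upgrade the one-sided preservation of unit $n$-distance to an exact $n$-isometry using the homogeneity axiom of the $n$-norm.

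For the first stage I would apply Lemma \ref{inj&affind_lem} to $\phi$: since $\phi$ satisfies \eqref{nDOPP_eq}, it is injective and its range is not contained in any affine line. Combined with the standing hypothesis that $\phi$ is a lineation, and with the fact that $X$ and $Y$ are at least two-dimensional (their dimensions are $\geq n \geq 2$), Corollary \ref{aff_cor} applies and shows that $\phi$ is an injective affine transformation. Writing $Tx := \phi(x) - \phi(0)$, the map $T\colon X\to Y$ is an injective linear transformation and $\phi(x) = \phi(0) + Tx$; in particular $\phi(x_j) - \phi(x_0) = T(x_j - x_0)$ for all $x_0,\dots,x_n \in X$, so it suffices to prove that $T$ satisfies \eqref{nNP_eq}.

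For the second stage I would translate \eqref{nDOPP_eq} into a statement about $T$ alone: since $x_1-x_0,\dots,x_n-x_0$ range over all $n$-tuples of $X$ (take $x_0 = 0$), the $n$DOPP property says that $\|y_1,\dots,y_n\| = 1$ implies $\|Ty_1,\dots,Ty_n\| = 1$. To remove the normalisation, take linearly independent $y_1,\dots,y_n \in X$ and put $c := \|y_1,\dots,y_n\| > 0$; homogeneity of the $n$-norm gives $\|c^{-1}y_1, y_2,\dots,y_n\| = 1$, hence $\|T(c^{-1}y_1), Ty_2,\dots,Ty_n\| = 1$, and since $T(c^{-1}y_1) = c^{-1}Ty_1$ another application of homogeneity yields $\|Ty_1,\dots,Ty_n\| = c = \|y_1,\dots,y_n\|$. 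If instead $y_1,\dots,y_n$ are linearly dependent, then so are $Ty_1,\dots,Ty_n$ because $T$ is linear, and both $n$-norms vanish. Thus $T$ satisfies \eqref{nNP_eq}, and therefore $\phi$ is an affine $n$-isometry.

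I do not expect a serious obstacle here, as this is a clean corollary of the earlier results; the only point requiring care is the last rescaling step, where injectivity of $T$ (inherited from injectivity of $\phi$) is precisely what guarantees that $Ty_1,\dots,Ty_n$ remain linearly independent so that $\|Ty_1,\dots,Ty_n\|$ does not degenerate. It is worth noting that this argument never uses $n \geq 3$, so Theorem \ref{pmlin_nn_thm} is not invoked.
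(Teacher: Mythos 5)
Your proposal is correct and follows exactly the route the paper intends: the paper states this corollary as an ``easy consequence of Lemma \ref{inj&affind_lem} and Corollary \ref{aff_cor}'' without writing out details, and your two stages (affinity via the lemma plus Corollary \ref{aff_cor}, then the homogeneity rescaling to upgrade $n$DOPP to \eqref{nNP_eq} for the linear part) are precisely the omitted argument. The only cosmetic remark is that your final caveat about injectivity is not actually needed, since $\|Ty_1,\dots,Ty_n\|=c>0$ already follows directly from applying $n$DOPP to $c^{-1}y_1,y_2,\dots,y_n$.
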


Next, let us observe that Corollary \ref{nn_cor} implies the following results: 
\cite[Theorem 3.1]{2007_nn}, \cite[Theorems 2.10 and 2.12]{2004_nn2}, \cite[Theorems 4 and 6]{2004_nn} and \cite[Theorem 3.6]{2010_nn}. 
Namely, if in \cite[Theorem 3.1]{2007_nn} we relax the assumption about 2-isometriness and instead simply assume that $f$ is a 2DOPP mapping, then we basically get the statement of Corollary \ref{nn_cor} when $n=2$. 
In \cite[Theorem 2.10]{2004_nn2} if we drop the $n$-Lipschitz and the $n$-collinearity assumptions, then again we obtain the statement of Corollary \ref{nn_cor}. We can deal similarly with \cite[Theorem 2.12]{2004_nn2}. 
The paper \cite{2004_nn} considers 2-isometries. Similarly as before, the conditions about 2-Lipschitzness in \cite[Theorem 4]{2004_nn} can be deleted and we still get the same conclusion. Also, in \cite[Theorem 6]{2004_nn} the property (*) is unnecessary to assume.
Finally, let us consider the statement of \cite[Theorem 3.6]{2010_nn}. As it was shown in \cite[Lemma 3.3]{2010_nn}, any map satisfying (i)--(iii) automatically preserves $n$-distance $\rho$. Also (iii) implies that $f$ is a lineation. Therefore, as in Corollary \ref{nn_cor}, we immediately infer that $f$ is an $n$-isometry. So the assumption about weak-$n$-isometriness is excrescent.

We proceed with proving the Mazur--Ulam theorem for $n$-normed spaces. 

\begin{cor}
Every $n$-isometry $\phi\colon X\to Y$ is automatically affine ($n\geq 2$).
\end{cor}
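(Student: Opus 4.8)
The plan is to treat $n=2$ and $n\ge3$ separately, the former being essentially immediate. For $n=2$, observe that a $2$-isometry $\phi$ satisfies \eqref{nDOPP_eq} (it preserves \emph{every} $2$-distance, in particular $1$, and in both directions), while three points $p_0,p_1,p_2$ are collinear exactly when $\|p_1-p_0,p_2-p_0\|=0$; since $\phi$ preserves this quantity, it carries collinear triples to collinear triples, hence is a lineation, and Corollary \ref{nn_cor} gives that $\phi$ is an affine $2$-isometry.

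Now assume $n\ge3$. As \eqref{nI_eq} involves only differences, replacing $\phi$ by $\phi-\phi(0)$ alters neither the hypothesis nor the conclusion, so we may assume $\phi(0)=0$; taking $x_0=0$ in \eqref{nI_eq} then shows that $\phi$ satisfies \eqref{nNP_eq}. By Theorem \ref{pmlin_nn_thm} there exist $\epsilon\colon X\to\{-1,1\}$ and an injective linear transformation $A\colon X\to Y$ with $\phi(x)=\epsilon(x)Ax$, and from ($n$N3) and \eqref{nNP_eq} we read off $\|Ax_1,\dots,Ax_n\|=\|x_1,\dots,x_n\|$, i.e.\ $A$ preserves the $n$-norm. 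It therefore suffices to prove that $\epsilon$ is constant, since then $\phi=\pm A$ is linear and the original map affine.

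To force $\epsilon$ to be constant I would substitute carefully chosen $(n+1)$-tuples into \eqref{nI_eq}. Injectivity of $\phi$ (an easy consequence of \eqref{nI_eq}) forces $\epsilon(x)Ax\neq\epsilon(-x)A(-x)$, hence $\epsilon(-x)=\epsilon(x)$. Next fix linearly independent $x_1,x_2$, extend to a linearly independent system $x_1,\dots,x_n$, and put $p=Ax_1$, $q=Ax_2$, $r_i=Ax_i$. Feeding into \eqref{nI_eq} the base point $x_0=tx_1$ (resp.\ $x_0=x_1+x_2$) together with $x_1,x_2,x_3,\dots,x_n$, then using ($n$N5) to delete the spurious components along $p$ (resp.\ along the plane spanned by $p$ and $q$) in the last $n-2$ slots and Lemma \ref{seged1} to evaluate the remaining $2$-norm in the first one (resp.\ two) slots, one arrives at the scalar identities
\[
|1-t|=\bigl|\epsilon(x_1)-t\,\epsilon(tx_1)\bigr|
\qquad\text{and}\qquad
1=\bigl|(\epsilon(x_1)-\epsilon(x_1+x_2))(\epsilon(x_2)-\epsilon(x_1+x_2))-1\bigr|.
\]
Because the $\epsilon$-values lie in $\{-1,1\}$, the first identity forces $\epsilon(tx_1)=\epsilon(x_1)$ for every $t\neq0$, so $\epsilon$ is constant on each line through the origin with $0$ removed; the second excludes the case $\epsilon(x_1)=\epsilon(x_2)=-\epsilon(x_1+x_2)$, that is, two linearly independent vectors with a common $\epsilon$-value give their sum the same value. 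Writing $P=\{x\neq0:\epsilon(x)=1\}$ and $M=\{x\neq0:\epsilon(x)=-1\}$, both are cones closed under addition of linearly independent vectors; if both were non-empty then any $u\in P$ and $v\in M$ would automatically be linearly independent, and inspecting the $\epsilon$-value of $u+v$ (whether it lies in $P$ or in $M$) yields a contradiction in either case. Hence one of $P,M$ is empty, i.e.\ $\epsilon$ is constant.

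The main obstacle I anticipate is precisely this last point for $n\ge3$: absolute homogeneity ($n$N3) absorbs all signs in any configuration containing $0$, so one must isolate the few configurations — a scalar multiple of a given vector, and a vector together with one linearly independent from it — in which the signs genuinely interact, carry out the ($n$N5)/Lemma \ref{seged1} bookkeeping without slips, and then close with the combinatorial cone argument. Everything else is routine.
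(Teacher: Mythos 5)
Your proof is correct, and for $n\ge 3$ it takes a genuinely different route from the paper's. In the $n=2$ case you do exactly what the paper does: a $2$-isometry preserves the vanishing of $\|p_1-p_0,p_2-p_0\|$, hence is a lineation, and one finishes with the fundamental theorem of affine geometry (the paper invokes Corollary \ref{aff_cor} directly after Lemma \ref{inj&affind_lem}; your appeal to Corollary \ref{nn_cor} is the same thing prepackaged). For $n\ge 3$ the paper applies Theorem \ref{pmlin_nn_thm} to the shifted map $x\mapsto \phi(x+a)-\phi(a)$ for \emph{every} base point $a$, concludes only that $\phi$ is a lineation, and again closes with the fundamental theorem of affine geometry. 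You instead apply Theorem \ref{pmlin_nn_thm} once, at the origin, and then eliminate the sign function $\epsilon$ by hand. I checked the two key computations and they are sound: with $a=\epsilon(x_1)$, $b=\epsilon(x_2)$, $s=\epsilon(x_1+x_2)$ the relevant $2\times 2$ determinant is $(a-s)(b-s)-1$, which can only take the values $-1$, $3$, $-5$, so the first two slots always span $[Ax_1,Ax_2]$ and the ($n$N5) clearing is legitimate; the resulting constraint $\bigl|(a-s)(b-s)-1\bigr|=1$ forces $(a-s)(b-s)=0$, i.e.\ precisely the exclusion you state, and the cone argument on $P$ and $M$ (any $u\in P$, $v\in M$ are automatically independent, and $u+v\in P$ gives $v=(u+v)+(-u)\in P$, symmetrically for $M$) closes the loop. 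What your route buys is that it bypasses the fundamental theorem of affine geometry entirely for $n\ge 3$ and yields the sharper conclusion that a plus-minus linear $n$-isometry fixing $0$ has constant sign, i.e.\ it resolves the $\pm$ ambiguity left open by Theorem \ref{pmlin_nn_thm} under the stronger isometry hypothesis; what it costs is the ($n$N5)/Lemma \ref{seged1} bookkeeping, which the paper's one-line reduction to a lineation avoids.
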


\begin{proof}
Lemma \ref{inj&affind_lem} gives that $\phi$ is injective and its range is not contained in any affine line. 
If $n = 2$, then \eqref{nI_eq} ensures that $\phi$ is a lineation, and by Corollary \ref{aff_cor} we are done.

On the other hand, if $n > 2$, then an easy calculation gives that for every $a\in X$ the map $X \ni x-a \mapsto \phi(x) - \phi(a) \in Y$ is an $n$-norm preserver.
Thus Theorem \ref{pmlin_nn_thm} implies that every affine line going through $a$ is mapped into an affine line which goes through $\phi(a)$.
Therefore $\phi$ is a lineation, which completes the proof.
\end{proof}

The above corollary was obtained e.g.~in \cite[Theorem 3.3]{2009_nn}.
We proceed with proving our result on surjective $n$SDOPP transformations.

\begin{proof}[Proof of Theorem \ref{nSDOPP_thm}]
Let us assume first that $n\geq 3$.
Then by Lemma \ref{inj&affind_lem} $\phi$ is a bijective lineation, moreover, the same holds for the inverse $\phi^{-1}\colon Y \to X$. 
An easy application of the classical version of the fundamental theorem of affine geometry (or also Corollary \ref{aff_cor}) gives that $\phi$ is affine, and therefore it is an $n$-isometry.

Now, assume that $n = 2$.
We obviously have that both $\phi$ and $\phi^{-1}$ are bijective 2SDOPP maps.
Therefore it is enough to show that $\phi$ is a lineation, because then we can use the classical version of the fundamental theorem of affine geometry. 
Let us consider three different collinear points $x_0,x_1,x_2$, and assume that $\phi(x_0),\phi(x_1),\phi(x_2)$ are not collinear. 
We may assume, by re-indexing these three points if necessary, that $x_1 \neq \tfrac{1}{2}(x_0+x_2)$. 
Then by \cite[Theorem 3.1]{2010_nn}, we can find a $\phi(x) \in Y$ such that
\[
\|\phi(x_0)-\phi(x),\phi(x_1)-\phi(x)\| = \|\phi(x_1)-\phi(x),\phi(x_2)-\phi(x)\| = 1.
\]
But this implies 
\[
\|x_0-x_1,x_1-x\| = \|x_0-x,x_1-x\| = 1 = \|x_1-x,x_2-x\| = \|x_1-x,x_2-x_1\|,
\]
and thus, by collinearity and ($n$N3), we get $x_2-x_1\in\{x_0-x_1,x_1-x_0\}$, a contradiction. 
Therefore $\phi$ is indeed a lineation. 
\end{proof}

We have the following Lester--Martin type consequence. 

\begin{cor}\label{nSDOPP_cor}
Let $n\geq 2$, $E$ and $F$ be two real inner product spaces, and $\phi\colon E\to F$ be a surjective transformation which satisfies the following condition:
\begin{equation}\label{nSVOPP_eq}\tag{$n$SVOPP}
\begin{gathered}
\Area_n(x_1-x_0,\dots, x_n-x_0) = 1 \quad \iff \\ \Area_n(\phi(x_1)-\phi(x_0),\dots, \phi(x_n)-\phi(x_0)) = 1 \qquad
(x_0,\dots x_n\in E),
\end{gathered}
\end{equation}
Then $\phi$ is an isometry if $\dim E \geq n+1$, and an equiaffinity if $\dim E = n$.
\end{cor}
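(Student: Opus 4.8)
The plan is to reduce the statement to Theorem \ref{nSDOPP_thm} and then push through the inner-product-space structure, exactly as Corollary \ref{T:RW_n_cor} was extracted from Theorem \ref{pmlin_nn_thm}. First I would observe that $\Area_n(\cdot,\dots,\cdot)$ is an $n$-norm on any inner product space of dimension at least $n$ (this was already noted in the introduction), so a surjection satisfying \eqref{nSVOPP_eq} is precisely a surjective $n$SDOPP transformation between the $n$-normed spaces $(E;\Area_n)$ and $(F;\Area_n)$. Theorem \ref{nSDOPP_thm} then immediately yields that $\phi$ is an injective affine $n$-isometry; in particular $\dim F = \dim E$ (using surjectivity and injectivity of an affine map) and, writing $\phi(x) = \phi(0) + Tx$, the linear part $T \colon E \to F$ is a bijection satisfying $\Area_n(Tx_1,\dots,Tx_n) = \Area_n(x_1,\dots,x_n)$ for all $x_1,\dots,x_n$.

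It then remains to analyze a bijective linear map $T$ preserving $n$-volume, which is a pure linear-algebra computation. The natural tool is polar decomposition: write $T$ in terms of a positive self-adjoint operator, or more concretely, choose orthonormal bases of $E$ and $F$ (or compose $T$ with an orthogonal map $F \to E$, which is an $n$-volume-preserving isometry and hence changes nothing) so that $T$ is represented by a diagonal matrix with positive entries $d_1,\dots,d_m$, where $m = \dim E$. When $m = n$, evaluating on a unit-volume box of all $m$ basis vectors gives $d_1\cdots d_n = 1$, i.e. the determinant of the linear part is $1$ — wait, $\pm 1$ once we drop the sign normalization — so $\phi$ is an equiaffinity. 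When $m \geq n+1$, the argument is identical to the one in the proof of Corollary \ref{T:RW_n_cor}: comparing $\Area_n$ on $\{h_1,\dots,h_n\}$ and on $\{h_2,\dots,h_{n+1}\}$ forces $d_1 = d_{n+1}$, and by symmetry all the $d_j$ are equal; then $d^n = 1$ with $d > 0$ forces $d = 1$, so $T$ is an isometry and $\phi$ is an isometry.

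I expect the main (very mild) obstacle to be bookkeeping rather than anything substantial: one must be slightly careful that Theorem \ref{nSDOPP_thm} is stated for spaces of the same symbolic type and that $E$, $F$ are allowed to have different dimensions a priori, so the equality $\dim E = \dim F$ should be deduced — it follows because $\phi$ is an affine bijection, and an affine map between real vector spaces is a bijection only if the underlying spaces have equal dimension. A second minor point is that the diagonalization via polar/singular value decomposition should really be carried out on finite-dimensional subspaces (for the $\dim E \geq n+1$ case one can even restrict to an arbitrary $(n+1)$-dimensional subspace $H$ and show $T|_H$ is an isometry, then let $H$ vary), mirroring the proof of Corollary \ref{T:RW_n_cor} verbatim; since that proof is already in the paper, here it suffices to invoke it.
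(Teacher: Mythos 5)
Your proposal is correct and follows essentially the same route as the paper: the paper likewise first invokes Theorem \ref{nSDOPP_thm} to get that $\phi$ is affine and then finishes by "an easy application of Corollary \ref{T:RW_n_cor}", whose proof contains exactly the singular-value/diagonalization computation you spell out. If anything, writing out that computation is slightly cleaner than the paper's citation, since Corollary \ref{T:RW_n_cor} is formally stated only for $n\geq 3$ while the present corollary allows $n=2$; once linearity is in hand the computation goes through for $n=2$ as well, which your version makes explicit.
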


\begin{proof}
Theorem \ref{nSDOPP_thm} gives us that $\phi$ is an affine transformation. 
An easy application of Corollary \ref{T:RW_n_cor} completes the proof.
\end{proof}

It would be interesting to explore whether the conclusion of Theorem \ref{pmlin_nn_thm} holds in the $n=2$ case, even for the very special 2-norm $\Area_2(\cdot,\cdot)$.
Note that by \cite[Theorem 1]{G6}, we have this conclusion for Hilbert spaces and bijective transformations. 
A possible way to attack this problem could be to show that we have the conclusion of Theorem \ref{pmlin_nn_thm} in the case when $\dim X = 2 = n$, and then apply the fundamental theorem of projective geometry for the general case.

It would be also interesting to see to what extent the assumptions of Theorem \ref{nSDOPP_thm} can be relaxed.
We suspect that in the most general case, i.e. for (not necessarily onto) $n$DOPP maps, there must be counterexamples, even for $\Area_n(\cdot,\dots,\cdot)$.
However, to the best of our knowledge, no counterexamples have been provided so far.


\end{document}